\DeclareSymbolFont{AMSb}{U}{msb}{m}{n}
\DeclareMathSymbol{\N}{\mathbin}{AMSb}{"4E}
\DeclareMathSymbol{\Z}{\mathbin}{AMSb}{"5A}
\DeclareMathSymbol{\R}{\mathbin}{AMSb}{"52}
\DeclareMathSymbol{\Q}{\mathbin}{AMSb}{"51}
\DeclareMathSymbol{\I}{\mathbin}{AMSb}{"49}
\DeclareMathSymbol{\C}{\mathbin}{AMSb}{"43}
\newtheorem{theorem}{Theorem}
\newtheorem{lemma}{Lemma}
\title{Expectation of Stratonovich iterated integrals of Wiener processes}
\author{Christophe Ladroue\\Department of Statistics\\University of Warwick, UK}
\begin{document}
\maketitle{}
\begin{abstract}
The solution of a (stochastic) differential equation (SDE) can be locally approximated by a stochastic expansion, a linear combination of iterated integrals. Quantities of interest, like moments, can then be approximated with the expansion. We present a formula for the case where the drivers of the equation are time and Wiener processes. We also present a \texttt{Mathematica} implementation of the result.
\end{abstract}

\section{Introduction}
We consider the stochastic differential equation
$$
dY_t=f(Y_t)dt+\sum_{i=1}^{n-1}g(Y_t)dW_t^i
$$

where the drivers $W^i$ are Wiener processes. By convention, we set time as the zeroth driver: $dt=dW_t^0$. We call $J_\alpha(t)$ the Stratonovich iterated integral according to the multi-index $\alpha$, $\alpha_i$ running from $0$ to $n-1$. We call $I_\alpha(t)$ the It\^o iterated integral according to the multi-index $\alpha$:
\begin{eqnarray*}
I_\alpha(t)&=& \int \hdots \int_{0< u_1< \hdots < u_k<t}dW_{u_1}^{\alpha_1}\hdots dW_{u_\ell}^{\alpha_\ell}\\
J_\alpha(t)&=& \int \hdots \int_{0< u_1< \hdots < u_k<t}\circ dW_{u_1}^{\alpha_1}\hdots \circ dW_{u_\ell}^{\alpha_\ell}
\end{eqnarray*}
with $|\alpha|=\ell$.

Our aim is to arrive at a formula for the expectation of $J_\alpha$ for all $\alpha$. We will use two well-known results: the expectation of It\^o iterated integrals $I_\alpha$ is very easy to calculate and a recursive formula link the two iterated integrals. Specifically:
\begin{enumerate}
\item For Wiener processes, $E I_\alpha=0$ if $\exists i$ such that $\alpha_i\neq0$. Otherwise, $E I_\alpha(t)=t^\ell/\ell!$
\item $J_\alpha=\int J_{\alpha-}dW^{\alpha_\ell}+\frac{1}{2}\chi(\alpha_{\ell-1}=\alpha_\ell\neq0)\int J_{\alpha--}ds$ (Lemma 3, \cite{Tocino2009})
\end{enumerate}

From this, we can derive a closed formula for the expectation of any $J_\alpha$.

\section{Formula for $E J_\alpha$}
\begin{lemma}\label{lemma:decomposition}
$J_\alpha=\sum c_\beta I_\beta+p_\alpha I_{0,0,\dots, 0}$
where the first sum is done over some words $\beta$ which all contain at least $1$ non-zero letter. $c_\beta$ and $p_\alpha$ are reals, $p_\alpha$ can be zero. We denote $q_\alpha$  the number of zeros in the last It\^o integral. 
\end{lemma}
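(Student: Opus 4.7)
The plan is to proceed by induction on the length $\ell = |\alpha|$ of the multi-index, using the recursive Itô-Stratonovich conversion formula (item 2 in the introduction).

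For the base case $\ell=1$, we have $J_{(i)} = I_{(i)}$ for every $i \in \{0,1,\dots,n-1\}$, so if $i\neq 0$ we set $c_{(i)} = 1$ and $p_\alpha = 0$, while if $i=0$ we set $p_\alpha = 1$ and $q_\alpha = 1$ with no other terms. Both forms match the claimed decomposition.

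For the inductive step, suppose the lemma holds for all indices of length strictly less than $\ell$. Apply the induction hypothesis to write
\begin{equation*}
J_{\alpha-} = \sum_{\beta} c^{(1)}_{\beta}\, I_\beta + p^{(1)}\, I_{0^{q_1}}, \qquad J_{\alpha--} = \sum_{\beta} c^{(2)}_{\beta}\, I_\beta + p^{(2)}\, I_{0^{q_2}},
\end{equation*}
where each $\beta$ in the sums contains at least one non-zero letter. Using the key fact that $\int I_\gamma\, dW^{k} = I_{\gamma,k}$ (i.e.\ Itô integration simply appends a letter to the multi-index), I would then substitute these expressions into the recursion
\begin{equation*}
J_\alpha = \int J_{\alpha-}\, dW^{\alpha_\ell} + \tfrac12\, \chi(\alpha_{\ell-1}=\alpha_\ell\neq 0)\int J_{\alpha--}\, ds,
\end{equation*}
which produces a linear combination of Itô integrals of the form $I_{\beta,\alpha_\ell}$, $I_{0^{q_1},\alpha_\ell}$, $I_{\beta,0}$, and $I_{0^{q_2+1}}$.

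The remainder of the proof is a short case analysis on the last two letters of $\alpha$ to identify which of the produced Itô integrals is the pure-zero term. If $\alpha_\ell = 0$, only the first integral contributes, and appending a $0$ to the pure-zero term $I_{0^{q_1}}$ yields the new pure-zero term $I_{0^{q_1+1}}$, so $p_\alpha = p^{(1)}$ and $q_\alpha = q_1+1$. If $\alpha_\ell \neq 0$ and $\alpha_{\ell-1}\neq\alpha_\ell$, the correction term vanishes and every remaining integral ends with the non-zero letter $\alpha_\ell$, so $p_\alpha = 0$. If $\alpha_{\ell-1}=\alpha_\ell\neq 0$, the only pure-zero contribution comes from the correction term, giving $p_\alpha = \tfrac12 p^{(2)}$ and $q_\alpha = q_2+1$; every other integral still ends with the non-zero letter $\alpha_\ell$ or inherits a non-zero letter from some $\beta$. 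In all three cases the resulting expression has the prescribed form.

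The main subtlety, and the only step requiring care, is tracking the pure-zero Itô integral through the recursion: one must observe that a pure-zero integral can only arise from a pure-zero integral in a lower-order decomposition, and that in the mixed case (C) its length is $\ell-1$ rather than $\ell$, which is why a separate symbol $q_\alpha$ is needed instead of assuming it equals $|\alpha|$. Once this bookkeeping is set up correctly, each case reduces to a routine substitution, and $p_\alpha$ is uniquely determined because $I_{0^{q_\alpha}}$ is the unique pure-zero term remaining in the decomposition.
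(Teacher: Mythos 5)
Your proof is correct and follows essentially the same route as the paper: induction on $|\alpha|$ using the Stratonovich--It\^o recursion, the fact that It\^o integration appends a letter, and the same three-case analysis on the last one or two letters with the same bookkeeping of the unique pure-zero term. The only difference is that the paper takes $\ell\leq 2$ as the base case (checking those words explicitly), whereas your base case $\ell=1$ implicitly needs the convention $J_\emptyset=I_\emptyset=1$ (with $p=1$, $q=0$) when the correction term appears at $\ell=2$.
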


All quantities depend on $\alpha$ but the exact values of $\beta$ and $c_\beta$ will be of no interest for us here. In other words, the lemma states that $J_\alpha$ can be written as linear combination of It\^o iterated integrals, with \emph{at most} one of them of the form $I_{0,\dots,0}$.
The proof comes easily by recursion on the length of $\alpha$, allowing $p_\alpha$ to be equal to $0$. This result holds for all type of drivers; it is not limited to Wiener processes.
\begin{proof}
The lemma is true for $\ell\leq2$:
$$\begin{array}{lclclc}
J_0&=&I_0&&\\
J_1&=&I_1&&\\
J_{0,0}&=&\int I_0ds+0&=&I_{0,0}\\
J_{1,0}&=&\int I_1ds+0&=& I_{1,0}\\
J_{0,1}&=&\int I_0dW^1+0&=&I_{0,1}\\
J_{1,1}&=&\int I_1dW^1+\frac{1}{2}\int 1 ds&=&I_{1,1}+\frac{1}{2}I_0
\end{array}
$$

Assume the lemma is true for words up to length $<\ell$ and consider a word $\alpha$ of length $\ell$. Three cases are possible:
\begin{itemize}
\item $\alpha_\ell=0$
	We have:
\begin{eqnarray*}
J_\alpha&=&\int J_{\alpha-} ds+0\\
&=& \int (\sum c_\beta I_\beta+p_{\alpha-} I_{0,\hdots,0})ds\\
&=& \sum c_\beta I_{\beta,0}+p_{\alpha-} I_{0,\hdots,0,0}
\end{eqnarray*}
where all $\beta$'s have at least 1 non-zero letter.

\item $\alpha_{\ell-1}\neq\alpha_\ell\neq0$
We have:
\begin{eqnarray*}
J_\alpha&=&\int J_{\alpha-} dW^{\alpha_\ell}+0\\
&=& \int (\sum c_\beta I_\beta+p_{\alpha-} I_{0,\hdots,0})dW^{\alpha_\ell}\\
&=& \sum c_\beta I_{\beta,\alpha_\ell}+p_{\alpha-} I_{0,\hdots,0,\alpha_\ell}\\
&=& \sum c_{\beta'} I_{\beta'}
\end{eqnarray*}

\item $\alpha_{\ell-1}=\alpha_\ell\neq0$
We have:
\begin{eqnarray*}
J_\alpha&=&\int J_{\alpha-} dW^{\alpha_\ell}+\frac{1}{2}\int J_{\alpha--}ds\\
&=& \int (\sum c_\beta I_\beta+p_{\alpha-} I_{0,\hdots,0})dW^{\alpha_\ell}+\frac{1}{2}\int (\sum c_{\beta'} I_{\beta'}+p_{\alpha--} I_{0,\hdots,0})ds\\
&=& \sum c_{\beta''} I_{\beta'',\alpha_\ell}+\frac{1}{2}p_{\alpha--} I_{0,\hdots,0,0}
\end{eqnarray*}

\end{itemize}

\end{proof}

\begin{lemma} 
For Wiener processes, $EJ_\alpha(t)=p_\alpha .t^{q_\alpha}/q_\alpha!$ or $0$.
\end{lemma}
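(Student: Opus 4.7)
The plan is to combine the decomposition from Lemma~\ref{lemma:decomposition} with the elementary formula for $EI_\alpha$ stated in point~1 of the introduction. There is nothing deep to prove here, so the argument should be a one-step calculation rather than an induction.

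First I would write, using Lemma~\ref{lemma:decomposition},
\[
J_\alpha \;=\; \sum_\beta c_\beta I_\beta \;+\; p_\alpha \, I_{\underbrace{0,\dots,0}_{q_\alpha}},
\]
where by construction each multi-index $\beta$ appearing in the first sum contains at least one non-zero letter. Taking expectation and using linearity gives
\[
E J_\alpha(t) \;=\; \sum_\beta c_\beta \, E I_\beta(t) \;+\; p_\alpha \, E I_{0,\dots,0}(t).
\]

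Next I would invoke point~1 of the introduction: for Wiener drivers, $E I_\beta(t)=0$ whenever $\beta$ has a non-zero component, while $E I_{0,\dots,0}(t)=t^{q_\alpha}/q_\alpha!$ when there are $q_\alpha$ zeros. All terms in the first sum vanish, leaving
\[
E J_\alpha(t) \;=\; p_\alpha \, \frac{t^{q_\alpha}}{q_\alpha!}.
\]
The ``or $0$'' alternative in the statement is then just the degenerate case $p_\alpha=0$, which Lemma~\ref{lemma:decomposition} explicitly allows.

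Since the substantive work has already been done in Lemma~\ref{lemma:decomposition}, there is no real obstacle; the only thing to be careful about is checking that the decomposition genuinely has \emph{at most one} pure-zero It\^o integral, so that no additional term of the form $t^k/k!$ can sneak in. This was the whole purpose of the previous lemma, and its recursion covers the three cases ($\alpha_\ell=0$; $\alpha_{\ell-1}\neq\alpha_\ell\neq0$; $\alpha_{\ell-1}=\alpha_\ell\neq0$) in a way that preserves uniqueness, so the present lemma follows immediately.
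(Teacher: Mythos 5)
Your argument is correct and is essentially the paper's own proof: take expectations in the decomposition of Lemma~\ref{lemma:decomposition}, kill every $I_\beta$ containing a non-zero letter via the It\^o expectation formula, and keep only the $p_\alpha I_{0,\dots,0}$ term, with the ``or $0$'' case corresponding to $p_\alpha=0$. The paper merely states this in one line, so your write-up is just a more explicit version of the same calculation.
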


This follows directly from lemma \ref{lemma:decomposition} by taking the expectation on both sides. $E J_\alpha\neq0$ iff $p_\alpha\neq0$, \emph{i.e.} iff $J_\alpha$ has an $I_{0,\dots,0}$ component.

\begin{lemma}
For Wiener processes, $EJ_\alpha\neq0$ iff $\alpha$ is a sequence of either $0$ or pairs $mm$, where $m$ is a driver $\neq0$.
\end{lemma}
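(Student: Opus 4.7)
By the preceding lemma, $EJ_\alpha\neq 0$ iff $p_\alpha\neq 0$, so the task reduces to characterising exactly which $\alpha$ have $p_\alpha\neq 0$. The plan is to induct on $\ell=|\alpha|$, using a scalar recursion for $p_\alpha$ that I would read off from the three-case split in the proof of Lemma~\ref{lemma:decomposition}.

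The first step is to distil that recursion. Since $p_\alpha$ is, by definition, the coefficient of $I_{0,\dots,0}$ in $J_\alpha$, and since appending any letter to a word that already contains a nonzero letter produces a word that still contains a nonzero letter, the $\sum c_\beta I_\beta$ residue can never contribute to $p_\alpha$ after the integration step. The only route into $I_{0,\dots,0}$ is via the $I_{0,\dots,0}$ piece of the shorter integral, and only when the appended letter is itself $0$. Hence
\begin{equation*}
p_\alpha=\begin{cases} p_{\alpha-} & \text{if } \alpha_\ell=0,\\ 0 & \text{if } \alpha_{\ell-1}\neq \alpha_\ell\neq 0,\\ \tfrac{1}{2}\,p_{\alpha--} & \text{if } \alpha_{\ell-1}=\alpha_\ell\neq 0.\end{cases}
\end{equation*}

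The second step is the induction itself. Let $P$ denote the set of multi-indices that are concatenations of single $0$'s and equal nonzero pairs $mm$. The base cases $\ell\in\{0,1\}$ are immediate by inspection: only the empty word and $(0)$ have $p_\alpha\neq 0$, and these are the only length-$\leq 1$ members of $P$. For the inductive step I would treat the same three cases:
\begin{itemize}
\item If $\alpha_\ell=0$, then $\alpha\in P\Longleftrightarrow \alpha-\in P$, and by IH this matches $p_{\alpha-}\neq 0$, hence $p_\alpha\neq 0$.
\item If $\alpha_{\ell-1}\neq\alpha_\ell\neq 0$, then $p_\alpha=0$ by the recursion, and also $\alpha\notin P$ because a trailing nonzero letter in any element of $P$ must sit in an equal pair.
\item If $\alpha_{\ell-1}=\alpha_\ell\neq 0$, then $\alpha\in P\Longleftrightarrow\alpha--\in P$ (stripping off the terminal pair $mm$), and by IH this matches $p_{\alpha--}\neq 0$, hence $p_\alpha\neq 0$.
\end{itemize}

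The only delicate point is the justification that the recursion above is \emph{exact}, i.e.\ that no stray $I_{0,\dots,0}$ contribution can sneak in from the $\sum c_\beta I_\beta$ part. But this is immediate from Lemma~\ref{lemma:decomposition}, which has already isolated $p_\alpha$ as the unique coefficient of $I_{0,\dots,0}$, together with the observation that integrating $I_\beta$ against $dW^{\alpha_\ell}$ or $ds$ appends a single letter and therefore preserves the presence of a nonzero letter in $\beta$. With that verified, the rest is pure word-combinatorics on $\alpha$ and the characterisation follows.
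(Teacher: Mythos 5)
Your proposal is correct and follows essentially the same route as the paper: both reduce the question to whether $J_\alpha$ has an $I_{0,\dots,0}$ component (i.e.\ $p_\alpha\neq0$) and then run the case split on $\alpha_\ell$ inherited from Lemma~\ref{lemma:decomposition}, noting that the $\sum c_\beta I_\beta$ part can never feed into $I_{0,\dots,0}$. Your version is in fact slightly more complete than the paper's, which only states the necessity direction of the recursion, whereas you spell out the induction giving both directions of the characterisation.
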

\begin{proof}
For $J_\alpha$ to have an $I_{0,\dots,0}$ component, it is necessary to have:
\begin{itemize}
\item[a)] $\alpha_\ell=0$ and $J_{\alpha-}$ to have an $I_{0,\dots,0}$ component.
\item[b)] $\alpha_\ell\neq0$ and $\alpha_{\ell-1}=\alpha_\ell$ and $J_{\alpha--}$ to have an $I_{0,\dots,0}$ component.
\end{itemize}
Note that a) and b) are mutually exclusive.
\end{proof}

\begin{lemma}
When $p_\alpha\neq0$, $p_\alpha=1/2^\frac{\#\{\alpha_i\neq0\}}{2}$ and $q_\alpha=(\#\{\alpha_i\neq0\})/2+(\#\{\alpha_i=0\})$
\end{lemma}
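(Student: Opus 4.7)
The plan is to induct on the length $\ell$ of $\alpha$, piggy-backing on exactly the recursion used in the proof of Lemma \ref{lemma:decomposition}. The previous lemma has already told us which $\alpha$ are relevant: whenever $p_\alpha \neq 0$, the last letter of $\alpha$ is either a $0$ (case a) or the tail of a pair $mm$ with $m \neq 0$ (case b), and the shorter indices $\alpha-$ or $\alpha--$ arising on the right-hand side still have nonzero $p$. So only two recursion cases need to be tracked, and I only need to check that each one preserves the claimed closed form for $p_\alpha$ and $q_\alpha$.

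Writing $N(\alpha)=\#\{\alpha_i\neq 0\}$ and $Z(\alpha)=\#\{\alpha_i=0\}$, the target identities are $p_\alpha=2^{-N(\alpha)/2}$ and $q_\alpha=N(\alpha)/2+Z(\alpha)$. The base case $\alpha=\emptyset$ is immediate from $J_\emptyset=I_\emptyset=1$, giving $p_\emptyset=1=2^0$ and $q_\emptyset=0$; equivalently one can use $\alpha=(0)$ or $\alpha=(m,m)$ as base cases, both of which I would verify explicitly from the table at the start of the proof of Lemma \ref{lemma:decomposition}.

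For the inductive step I would lift the two surviving cases of that proof. In case a, $\alpha_\ell=0$, and the derivation there shows $p_\alpha=p_{\alpha-}$ while an extra $0$ is appended to the all-zero Itô integral, so $q_\alpha=q_{\alpha-}+1$; since $N$ is unchanged and $Z$ grows by $1$, both formulas survive the induction. In case b, $\alpha_{\ell-1}=\alpha_\ell\neq 0$, the same proof gives $p_\alpha=\tfrac{1}{2}p_{\alpha--}$, and the Stratonovich-to-Itô correction $\tfrac{1}{2}\int\cdot\,ds$ appends a $0$ to the all-zero integral, so $q_\alpha=q_{\alpha--}+1$. Since $N$ grows by $2$ and $Z$ is unchanged, $2^{-N(\alpha--)/2-1}=2^{-N(\alpha)/2}$ and $q_{\alpha--}+1=(N(\alpha)-2)/2+Z(\alpha)+1=N(\alpha)/2+Z(\alpha)$, closing the induction.

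The only place where one can easily slip is case b: one must notice that the $\int\cdot\,ds$ produced by the Stratonovich correction appends an extra $0$ to $I_{0,\dots,0}$, so that $q_\alpha=q_{\alpha--}+1$ rather than $q_\alpha=q_{\alpha--}$. Once that is pinned down, everything else is bookkeeping on the two recursions.
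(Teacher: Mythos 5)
Your proof is correct and follows essentially the same route as the paper: an induction driven by the two surviving cases (last letter $0$, or a trailing pair $mm$), with the recursions $p_\alpha=p_{\alpha-}$, $q_\alpha=q_{\alpha-}+1$ in the first case and $p_\alpha=\tfrac{1}{2}p_{\alpha--}$, $q_\alpha=q_{\alpha--}+1$ in the second, which immediately yield the closed form. The paper states exactly this recursion (more tersely, and with a typo $p_\alpha=p_\alpha/2$ for $p_\alpha=p_{\alpha--}/2$); your version merely adds the base cases and the explicit bookkeeping.
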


\begin{proof}
This comes from the proof for result 3: if we are in case a), we integrate $J_{\alpha-}$ with respect to time, so $q_\alpha=q_{\alpha-}+1$. In case b), we integrate $J_{\alpha--}$ with respect to time and divide by 2, so $q_\alpha=q_{\alpha--}+1$ and $p_\alpha=p_\alpha/2$. 
\end{proof}

\begin{theorem}\label{theorem:mainTheorem}
Given a word $\alpha$ and assuming Wiener processes:
$$EJ_\alpha(t)=\left\{
\begin{array}{l}
0\textnormal{ if }\alpha \textnormal{ is not a sequence of } 0 \textnormal{ and pairs } mm\\
p_\alpha\frac{t^{q_\alpha}}{q_\alpha!}\textnormal{ as defined in result 4 otherwise.}
\end{array}\right.$$
\end{theorem}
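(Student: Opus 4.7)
The plan is to observe that the theorem is essentially a repackaging of the three preceding lemmas, so the task is to assemble them cleanly rather than to prove anything new. I would open by invoking Lemma~\ref{lemma:decomposition} to write
$$J_\alpha(t) = \sum_\beta c_\beta I_\beta(t) + p_\alpha I_{0,\dots,0}(t),$$
where every $\beta$ in the sum contains at least one non-zero letter. Taking expectations and using fact~1 from the introduction (which kills every $I_\beta$ with a non-zero index and gives $E I_{0,\dots,0}(t) = t^{q_\alpha}/q_\alpha!$), I immediately get $E J_\alpha(t) = p_\alpha \, t^{q_\alpha}/q_\alpha!$, which is Lemma~2 restated.

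Next I would do the case split on $\alpha$. If $\alpha$ is not a sequence built from $0$'s and repeated non-zero pairs $mm$, then by Lemma~3 we have $p_\alpha = 0$, hence $E J_\alpha(t) = 0$, giving the first branch of the theorem. Otherwise, Lemma~3 guarantees $p_\alpha \neq 0$, and Lemma~4 supplies the explicit values $p_\alpha = 1/2^{\#\{\alpha_i \neq 0\}/2}$ and $q_\alpha = \#\{\alpha_i \neq 0\}/2 + \#\{\alpha_i = 0\}$. Substituting these into $p_\alpha t^{q_\alpha}/q_\alpha!$ yields the second branch.

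There is essentially no obstacle left, because the hard combinatorial content, namely that the recursion in fact~2 preserves the ``at most one $I_{0,\dots,0}$ component'' structure and that this component survives only along the sequences described in Lemma~3, has already been carried out. The only small care required is to make sure the two branches are exhaustive and mutually exclusive: exhaustive because $p_\alpha$ is either zero or nonzero, and mutually exclusive for the same reason. So my write-up would be a short two- or three-line argument that simply chains Lemmas~\ref{lemma:decomposition} through~4 together, with the case analysis made explicit.
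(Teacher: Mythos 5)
Your proposal is correct and matches the paper's own treatment: the theorem is stated there without a separate proof precisely because it is the direct concatenation of Lemma~\ref{lemma:decomposition} and the three subsequent lemmas, exactly as you assemble them. The only thing worth making explicit in a write-up, as you note, is that the two branches correspond to the dichotomy $p_\alpha=0$ versus $p_\alpha\neq0$, which Lemma~3 identifies with the structural condition on $\alpha$.
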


Thus, for example:
\\$\begin{array}{lclcl}
E J_{0,1,1,0,0}&=&1/2^{2/2} t^{(3+2/2)}/(3+2/2)!&=&\frac{1}{2}\frac{t^4}{4!}\\
E J_{0,1,1,0,0,1}&=&0&&\\
E J_{2,2,1,1,3,3}&=& 1/2^{6/2} t^{(0+6/2)}/(0+6/2)!&=&\frac{1}{2^3}\frac{t^3}{3!}\\
E J_{2,2,0,1,1,3,3,0,0,0}&=& 1/2^{6/2} t^{(4+6/2)}/(4+6/2)!&=&\frac{1}{2^3}\frac{t^7}{7!}\\
\end{array}
$

\section{\texttt{Mathematica} implementation}
The result in theorem \ref{theorem:mainTheorem} can easily be implemented in \texttt{Mathematica}. This iterative implementation is to replace a recursive one, as it requires less memory and is faster, computing the expectation in at most $\ell$ iterations (where $\ell$ is the size of word $\alpha$).
 
\begin{Verbatim}[numbers=left]
ExpSBM[t_, j[a_List]] := Module[{i, c},
   i = Length@a;
   c = {0, 0};
   Catch[
    While[i > 0,
     If[a[[i]] == 0,
      c += {0, 1}; i--,
      If[(i > 1) && (a[[i]] == a[[i - 1]]),
       c += {1, 1}; i -= 2,
       c = {Infinity, 0}; Throw@0
       ]]]];
   (1/2)^First@c t^Last@c/(Last@c)!];
\end{Verbatim}
Using this code yields the expected results:
\begin{Verbatim}
{ExpSBM[t, j[{0, 1, 1, 0, 0}]],
 ExpSBM[t, j[{0, 1, 1, 0, 0, 1}]],
 ExpSBM[t, j[{2, 2, 1, 1, 3, 3}]],
 ExpSBM[t, j[{2, 2, 0, 1, 1, 3, 3, 0, 0, 0}]]}

{t^4/48, 0, t^3/48, t^7/40320}
\end{Verbatim}

\section{Conclusion}
Using a relation between Stratonovich and It\^o integrals, we derived a simple formula for the expectation of Stratonovich iterated integrals when the drivers are Wiener processes. This result was then implemented in \texttt{Mathematica}.

\bibliography{strato_expectation}

\end{document}